\def\wt{\widetilde}
\def\bZ{\mathbb{Z}}
\def\bC{\mathbb{C}}
\def\bP{\mathbb{P}}
\def\cO{\mathcal{O}}
\def\W{\mathcal{W}}
\def\I{\mathcal{I}}
 \def\Syz{\mathrm{Syz}}
\def\Pic{\mathrm{Pic}}
\def\dim{\mathrm{dim}}
\def\ker{\mathrm{ker}}
\def\da{\dasharrow}
 \newtheorem{theorem}{Theorem}%[chapter]
\newtheorem{lemma}[theorem]{Lemma}
\newtheorem{prop}[theorem]{Proposition}
\newtheorem{corollary}[theorem]{Corollary}
\newtheorem{remark}[theorem]{Remark}
\begin{document}

\title{A characterization of bielliptic curves via syzygy schemes}

\author{Marian Aprodu}
\address{Faculty of Mathematics and Computer Science, University of Bucharest, 14 Academiei Street, 010014 Bucharest, Romania}
\address{Simion Stoilow Institute of Mathematics of the Romanian Academy, P.O. Box 1-764, 014700 Bucharest, Romania}
\email{marian.aprodu@imar.ro, marian.aprodu@fmi.unibuc.ro}
\author{Andrea Bruno}
\address{Dipartimento di Matematica e Fisica, Universit\`a degli Studi Roma Tre, Largo San Leonardo Murialdo, I-00146 Roma, Italy}
\email{bruno@mat.uniroma3.it}
\author{Edoardo Sernesi}
\address{Dipartimento di Matematica e Fisica, Universit\`a degli Studi Roma Tre, Largo San Leonardo Murialdo, I-00146 Roma, Italy}
\email{sernesi@mat.uniroma3.it}

\thanks{MA  thanks the University Roma Tre and Max Planck Institute Math. Bonn for hospitality during the preparation of this work. MA's work was partly supported by a grant of Ministery of Research and Innovation, CNCS - UEFISCDI, project number PN-III-P4-ID-PCE-2016-0030, within PNCDI~III. ES  thanks the Max Planck Institute Math. Bonn for hospitality during the preparation of this work. ES was supported by the PRIN Project ''Geometry of Algebraic Varieties'' and by INDAM-GNSAGA}
%\author{Marian Aprodu, Andrea Bruno, Edoardo Sernesi}
\date{\today}
\maketitle
%%%%%%%%%%%%%%%%%%%%%%%%%%%%%%%%%%%%%%%%%%%%%%%%%%

\begin{abstract}
We prove that a canonical curve $C$ of genus $\ge 11$ is bielliptic if and only if its second syzygy scheme $\mathrm{Syz}_2(C)$ is different from $C$.
\end{abstract}

\section*{Introduction}

Let $V$ be a complex vector space of dimension $(N+1)$ and let $\gamma$ be an element of $\wedge^pV\otimes V$ for some $1\le p\le N$. The \emph{syzygy scheme} associated to $\gamma$ is the largest subscheme $\mathrm{Syz}(\gamma)$ of $\mathbb P^N$ such that $\gamma$ represents a syzygy of $\mathrm{Syz}(\gamma)$, see \cite[\S 1]{mG82}. It means that $\delta(\gamma)\in\wedge^{p-1}V\otimes I_{\mathrm{Syz}(\gamma),2}$  where $\delta:\wedge^pV\otimes V\to \wedge^{p-1}V\otimes S_{V,2}$ denotes the Koszul differential. An element $\gamma$ gives rise to a space of quadrics on $\mathbb P^N$, the \emph{quadrics involved in $\gamma$}, defined as the image of the induced map $\wedge^{p-1}V^*\to S_{V,2}$.

Syzygy schemes are defined abstractly for any element $\gamma$ using multi-algebra tools. However, the most interesting cases occur when $\gamma$  already represents a Koszul class of a linearly--normal variety $X\subset \mathbb P^N$ since  $X$ is contained in $\mathrm{Syz}(\gamma)$. We use the simplified notation $K_{p,1}(X)$ for the Koszul cohomology spaces in the linear strand of the variety $X$ and we refer to \cite{mG84} and \cite{AN10} for the basics on Koszul cohomology.
In this context, one can define the \emph{p-th syzygy variety of $X$} as the scheme-theoretic intersection \cite{mG84}, \cite{sE94}, \cite{vB07}, \cite{ES12}:
\[
\mathrm{Syz}_p(X)=\bigcap_{0\ne\gamma\in K_{p,1}(X)}\mathrm{Syz}(\gamma).
\]

For large $p$, the syzygy schemes have been classified, \cite{mG84}, \cite{sE94}. For example, if $p>N-\mathrm{dim}(X)$, then $\mathrm{Syz}_p(X)=\mathbb P^N$ and for $p=N-\mathrm{dim}(X)$, $\mathrm{Syz}_p(X)$ is either $\mathbb P^N$ or it is a variety of minimal degree in which case $X=\mathrm{Syz}_p(X)$, \cite[Theorem (3.c.1)]{mG84}. The next two cases are also well understood \cite{mG84}, \cite{sE94}. The general expectation is that for large $p$ their degree is bounded in function of $p$ and $\mathrm{dim}(X)$.

In the curve case, the analysis of the last syzygy schemes leads to interesting conclusions. If $C\subset \mathbb P^{g-1}$ is a (nonhyperelliptic) canonical curve the above--quoted Theorem (3.c.1) of \cite{mG84} shows that $K_{g-3,1}(C)\ne 0$ if and only if $C$ is either trigonal or a nonsingular plane quintic.  By duality, \cite[Corollary (2.c.10)]{mG84}, it shows that the ideal of the canonical curve $C$ is generated by quadrics, unless $C$ is trigonal or a nonsingular plane quintic which is the classical Enriques--Petri Theorem. This result was the main source of inspiration for the celebrated Green conjecture which relates the Koszul cohomology of the canonical curves with the Clifford index \cite{mG84}, \cite{cV02}, \cite{cV05}. The next case of Green's conjecture, solved independently and almost simultaneously by C. Voisin and F.--O. Schreyer, shows that the ideal of a canonical curve $C$ is generated by quadrics and the relations among the quadrics are generated by linear relations, unless $C$ is 4-gonal or a plane sextic i.e. $\mathrm{Cliff}(C)=2$. The property that the ideal be generated by quadrics and the relations among the quadrics be generated by linear ones  is called \emph{property $(N_2)$}, whereas \emph{property $(N_1)$} means simply that the ideal is generated by quadrics, see~\cite{mG84}.

The results we mentioned above regard mostly the syzygy schemes that appear towards the end of the minimal resolution. Very little is known for the syzygy schemes connected to the \emph{beginning} of the resolution, the only place where they have appeared being, to the best of our knowledge \cite{CEFS13} where paracanonical curves of genus 8 were investigated. In our paper, we show that $\mathrm{Syz}_2$ has a strong influence on the geometry of curves. Our results can be summarized into the following

\begin{theorem}
\label{thm:mainTHM}
Let $C\subset \mathbb P^{g-1}$ be a nonhyperelliptic canonical curve of genus $\ge 11$ and gonality at least $4$. Then $\mathrm{Syz}_2(C)\ne C$ if and only if $C$ is not bielliptic. In the bielliptic case, $\mathrm{Syz}_2(C)$ is an elliptic cone.
\end{theorem}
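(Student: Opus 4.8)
The plan is to prove the two implications separately, in the form recorded in the abstract: $C$ is bielliptic if and only if $\mathrm{Syz}_2(C)\neq C$, the exceptional scheme being an elliptic cone.

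\emph{Bielliptic implies elliptic cone.} Let $\sigma$ be the bielliptic involution and $\pi\colon C\to E$ the degree-two quotient onto the elliptic curve $E$. First I would diagonalize the action of $\sigma$ on $V=H^0(C,K_C)$, writing $V=V^+\oplus V^-$. A Hurwitz/pushforward computation ($\pi_*\mathcal{O}_C=\mathcal{O}_E\oplus L^{-1}$ with $\deg L=g-1$ and $K_C=\pi^*L$) identifies $V^+=\pi^*H^0(E,L)$ of dimension $g-1$ and $\dim V^-=1$; let $w$ span $V^-$. The invariant sections then factor the canonical map through $E\hookrightarrow\mathbb{P}^{g-2}$, embedded as an elliptic normal curve of degree $g-1$ by $|L|$, so that $C$ lies on the cone $Y\subset\mathbb{P}^{g-1}$ over $E$ with vertex the coordinate point of $w$. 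A Riemann--Roch count gives $\dim I_{C,2}=\binom{g-2}{2}$ and $\dim I_{Y,2}=\dim I_{E,2}=\binom{g-2}{2}-1$, whence $I_{C,2}=I_{Y,2}\oplus\langle Q_0\rangle$, where $Q_0=w^2-s$ is the single extra quadric encoding the double cover, with $s\in S^2V^+$ the pulled-back branch section.

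The crux of this direction is to show that $Q_0$ is involved in no linear syzygy. I would grade the cubics $S_{V,3}$ by their degree in $w$. In any linear syzygy $\sum_i\ell_iQ_i=0$ the only quadric with a nonzero $w^2$-part is $Q_0$; writing $\ell_{i_0}=\ell_{i_0}^++\ell_{i_0}^-w$ for its coefficient, the $w^3$- and $w^2$-graded pieces force $\ell_{i_0}^-=\ell_{i_0}^+=0$, so $Q_0$ never occurs. Hence the quadrics involved in $K_{2,1}(C)$ lie in $I_{Y,2}=I_{E,2}$. Since $E$ is an elliptic normal curve of degree $g-1\geq 10$ it satisfies property $(N_2)$ and $\mathrm{Syz}_2(E)=E$; as these are polynomial identities they are also linear syzygies of $C$, and pulling them back to the cone shows that the quadrics involved in $K_{2,1}(C)$ cut out exactly $Y$. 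Therefore $\mathrm{Syz}_2(C)=Y$, an elliptic cone, and in particular $\mathrm{Syz}_2(C)\neq C$.

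\emph{$\mathrm{Syz}_2(C)\neq C$ implies bielliptic.} Here the strategy is to analyze $\Sigma:=\mathrm{Syz}_2(C)$, cut out by the quadrics involved in linear syzygies and strictly containing $C$. Since the Enriques--Petri theorem applies (gonality $\geq 4$ and $g\geq 11$), $I_{C,2}$ cuts out $C$ scheme-theoretically, so $\Sigma\supsetneq C$ means precisely that a nonzero space of quadrics through $C$ is involved in no linear syzygy. The aim is to prove that $\Sigma$ is an irreducible nondegenerate surface whose general hyperplane section is an elliptic normal curve of degree $g-1$. Granting this, the classification of surfaces of almost minimal degree applies: those with rational sections are rational normal scrolls, excluded since they would force gonality $\leq 3$ on $C$; smooth del Pezzo surfaces have degree $\leq 9<g-1$, excluded by $g\geq 11$; what remains is the cone over an elliptic normal curve. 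Projecting $C$ from the vertex of this cone onto $E$ yields a map of degree $\deg\mathcal{O}_C(1)/\deg\mathcal{O}_E(1)=(2g-2)/(g-1)=2$, exhibiting $C$ as bielliptic.

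The main obstacle is exactly the geometric control of $\Sigma$ in this converse: proving that the quadrics \emph{not} involved in linear syzygies form only a one-dimensional space, so that $\Sigma$ is a surface of degree $g-1$ with elliptic sections rather than something larger or degenerate. I expect this to rest on a cohomological estimate bounding the cokernel of the contraction $K_{2,1}(C)\otimes V^*\to I_{C,2}$, equivalently on identifying the unused quadrics with a Koszul cohomology group that the hypotheses $g\geq 11$ and gonality $\geq 4$ force to be at most one-dimensional; the sectional genus is then pinned to $1$ by adjunction on $\Sigma$ together with $\deg C=2g-2$. The forward direction, by contrast, is essentially the explicit $w$-grading argument above.
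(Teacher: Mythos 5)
Your reading of the statement is the right one (the printed ``if and only if $C$ is \emph{not} bielliptic'' is a slip: it contradicts both the abstract and the theorem's own last sentence), and your forward direction is correct. The $w$-grading argument showing that the extra quadric $Q_0=w^2-s$ can occur in no linear syzygy, so that every syzygy of $C$ involves only quadrics of $I_{Y,2}=I_{E,2}\subset S^2V^+$, is sound, and is in fact more explicit than what the paper does: in its Section 4 the same conclusion $K_{2,1}(C)=K_{2,1}(S)$ is obtained from the dimension counts $\kappa_{11}(S)=\kappa_{11}(C)-1$ and $\kappa_{21}(S)=\kappa_{21}(C)$, and then, exactly as you do, $\mathrm{Syz}_2$ of the cone is identified with the cone by passing to the elliptic normal hyperplane section and using property $(N_2)$, i.e.\ Proposition \ref{P:n2syz}. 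So on this half you and the paper agree in substance, with your version arguably cleaner.

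The converse, however, is not proved in your proposal; it is a plan whose missing step is the heart of the theorem, as you yourself acknowledge (``I expect this to rest on a cohomological estimate\dots''). Nothing in your text shows that $\Sigma=\mathrm{Syz}_2(C)$ is two-dimensional, irreducible, nondegenerate, of degree $g-1$, or that the space of quadrics not involved in any linear syzygy is at most one-dimensional; and no contraction-type estimate on $K_{2,1}(C)\otimes V^*\to I_{C,2}$ of the kind you hope for is available --- note that such an estimate would in particular have to subsume the Voisin--Schreyer theorem ($\mathrm{Cliff}(C)\ge 3\Rightarrow (N_2)$), which is precisely the tool the paper uses (with Proposition \ref{P:n2syz}) to reduce the converse to $4$-gonal curves. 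For those, the paper's route is constructive rather than classificatory: by Schreyer's theory, $C$ lies on a $3$-dimensional rational normal scroll $X$ and is a complete intersection of divisors $Q_1\in|2H-\lambda_0F|$, $Q_2\in|2H-\lambda_1F|$ with $\lambda_0+\lambda_1=g-5$. When $\lambda_0,\lambda_1>0$, the rolling-factors construction (Lemma \ref{L:syz1}) exhibits linear syzygies involving all $g-3$ lifted quadrics, and since the scroll minors are also involved in linear syzygies one gets $\mathrm{Syz}_2(C)=C$ (Corollary \ref{C:canonX}); when $\lambda_1=0$, the surface of almost minimal degree is not extracted abstractly from $\Sigma$ but is handed to you as the divisor $Q_1$ itself, of degree $(2H-(g-5)F)\cdot H^2=g-1$, which for $g\ge 11$ cannot be a Del Pezzo surface and is therefore an elliptic cone, making $C$ bielliptic (Theorem \ref{thm:main}). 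In other words, the classification of surfaces of almost minimal degree enters only \emph{after} the scroll geometry has produced the surface; your outline asks it to do work it cannot do on its own, and without the scroll-theoretic input (or a genuine substitute) the implication $\mathrm{Syz}_2(C)\ne C\Rightarrow C$ bielliptic remains unproved.
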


%In the bielliptic case, we prove that the second syzygy variety is an elliptic cone. 

The second syzygy schemes can be classified also for 4-gonal curves of genus $6\le g\le 10$ see section \ref{sec:4gonal}. Remark that, since general curves of genus 5 are complete intersections of three quadrics, their second syzygy schemes coincide always with the whole projective space.

In the trigonal case the situation is as follows. $\mathrm{Syz}_2(C)=\mathbb P^3$ if $g=4$ because $\dim(I_{C,2})=1$.  If $C$ is trigonal of genus $g \ge 5$ then $I_{C,2}=I_{F,2}$ where $F$ is the ruled surface containing $C$ swept by the trisecant lines to $C$. Then $\mathrm{Syz}_2(C)= \mathrm{Syz}_2(F)=F$ because $F$ is determinantal.

Note that canonical curves of gonality at least 4 have all the same numbers of linear syzygies among the quadrics, see for example Lemma \ref{L:k21can}, and hence the $\mathrm{Syz}_2(C)$ truly captures the geometry of $C$ in a more refined way, not being a simple question of number of syzygies.

For the proof of the main result, first we note that if a canonical curve is of Clifford index $\ge 3$ then $\mathrm{Syz}_2(C)=C$, Proposition \ref{P:n2syz}. Hence, it suffices to reduce to the case of curves of Clifford index 2. The second syzygy scheme of a plane sextic is a Veronese surface, Section \ref{P:n2syz}. Therefore, we reduce the analysis to the case of 4-gonal curves. 
The proof in this case is based on the classical approach using scrolls associated to pencils~\cite{fS86} and the sketch is the following.  A canonical 4--gonal curve defines a 3--dimensional scroll $X$ in $\mathbb P^{g-1}$ \cite{fS86} with bounded scrollar invariants, Section \ref{sec:4gonal}. The curve $C$ is a complete intersection of two quadric sections in $X$, \cite[Corollary 4.4]{fS86} and Section \ref{sec:4gonal}. Depending on the numerical type of these two quadrics, their liftings to $\mathbb P^{g-1}$ either are related with the other quadrics in $I_{X,2}$ by a linear syzygy, case in which $\mathrm{Syz}_2(C)=C$, or we are not in this situation, and then $\mathrm{Syz}_2(C)$ is a surface of degree $g-1$. The proof is completed by the classification of surfaces of almost minimal degree. The difference between the two cases follows from Section \ref{sec:scrolls}. The method that we use to produce quadrics and syzygies goes back to K. Petri,  \cite[\S 5]{kP22}. It has been given a modern abstract form by F.--O. Schreyer in \cite{fS86}, see also \cite{BH15}, \cite{mH16},  \cite{cB15}.

\medskip

{\bf Notation.} For any linearly normal variety $X\subset\mathbb P^N$ over the complex numbers, we use the following notation: $K_{p,q}(X)$ for $p,q\ge 0$ are the Koszul cohomology spaces of the polarized variety $(X,\mathcal O_X(1))$. For the definition and basic properties of Koszul cohomology we refer to \cite{mG84} and \cite{AN10}. The dimensions of these spaces are denoted by $\kappa_{pq}(X)$.

%--- I remember vaguely a paper by von Bothmer treating 4-gonal curves from this pt of view.

%%%%%%%%%%%%%%%%%%%%%%%%%%%%%%%%%%%%%%%%%%%%%%%%%%%

\section{The second syzygy scheme}

Let  $X\subset \bP^N$ be a complex projective variety of codimension $\ge 2$   satisfying property $(N_1)$, i.e. projectively normal and such that the homogeneous ideal $I_X\subset \bC[X_0,\dots,X_N]$ is generated by $I_{X,2}$.  We recall the definition of the second syzygy scheme $\Syz_2(X)$. 
If 
\[
0 \ne \gamma \in K_{2,1}(X) \cong K_{1,2}(I_X) = \ker\{V\otimes I_{X,2} \to I_{X,3}\}
\]
then, writing $\gamma=\sum \ell_i\otimes Q_i$, $\ell_i\in V$, $Q_i\in I_{X,2}$, we define:
\[
\Syz(\gamma) = \bigcap_i V(Q_i)
\]
and
\[
\Syz_2(X) = \bigcap_{0\ne\gamma\in K_{2,1}(X)} \Syz(\gamma)
\]

\begin{prop}
\label{P:n2syz}   
Let  $X\subset \bP^N$ be  projectively normal of codimension $\ge 2$ and satisfying property $(N_2)$.  Then 
\[
\Syz_2(X)=X
\]
\end{prop}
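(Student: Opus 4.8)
The plan is to prove the two inclusions separately, the inclusion $X\subseteq \Syz_2(X)$ being immediate: every quadric $Q_i$ occurring in a syzygy $\gamma=\sum_i\ell_i\otimes Q_i$ lies in $I_{X,2}$, so $X\subseteq V(Q_i)$ for all $i$, whence $X\subseteq \Syz(\gamma)$ and therefore $X\subseteq\Syz_2(X)$. For the reverse inclusion I would first reformulate $\Syz_2(X)$ scheme-theoretically. Since $\Syz(\gamma)=\bigcap_i V(Q_i)$ is cut out by the quadrics involved in $\gamma$, the scheme $\Syz_2(X)$ is cut out by the linear span $W\subseteq I_{X,2}$ of all quadrics involved in all $\gamma\in K_{2,1}(X)$; that is, $W$ is the image of the contraction map $V^*\otimes K_{2,1}(X)\to I_{X,2}$ sending $\phi\otimes\gamma$, with $\gamma=\sum_j e_j\otimes R_j$, to $\sum_j\phi(e_j)R_j$. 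Because $X$ satisfies $(N_1)$, the ideal $I_X$ is generated by $I_{X,2}$, so the subscheme of $\bP^N$ defined by $I_{X,2}$ is $X$; hence it suffices to prove that this contraction map is surjective, i.e. that $W=I_{X,2}$.

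I would argue this surjectivity by contradiction. Suppose $W\subsetneq I_{X,2}$ and choose a hyperplane $H\subseteq I_{X,2}$ with $W\subseteq H$ together with a quadric $Q_0\in I_{X,2}$, $Q_0\notin H$, so that $I_{X,2}=H\oplus\langle Q_0\rangle$. Every involved quadric lies in $W\subseteq H$; in particular, writing an arbitrary $\gamma=\sum_j e_j\otimes R_j\in K_{2,1}(X)=\ker\{V\otimes I_{X,2}\to I_{X,3}\}$ in a basis $(e_j)$ of $V$ and taking $\phi=e_j^*$ shows each $R_j\in H$. Thus $K_{2,1}(X)\subseteq V\otimes H$; in other words, no linear syzygy among the quadric generators of $I_X$ has a nonzero $Q_0$-component.

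Here is where property $(N_2)$ enters, and this is the step I expect to be the crux. Over $S=\bC[X_0,\dots,X_N]$, property $(N_2)$ says precisely that the module of first syzygies among the quadric generators of $I_X$ is generated by its linear part $K_{2,1}(X)=K_{1,2}(I_X)$ (equivalently $K_{2,j}(X)=0$ for $j\ge 2$, so there are no minimal first syzygies of degree $\ge 4$). Since each generating linear syzygy has vanishing $Q_0$-component, so does every $S$-linear combination, and hence the whole first-syzygy module. Equivalently, in the minimal presentation $S(-3)^{\beta}\to S(-2)^{\beta_1}\to I_X\to 0$ the row of the presentation matrix corresponding to $Q_0$ is identically zero, so the free summand $S\cdot e_{Q_0}\cong S(-2)$ maps isomorphically onto $S\cdot Q_0\subseteq I_X$ and splits off: $I_X\cong I'\oplus S(-2)$ with $I'$ a torsion-free $S$-module.

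The contradiction is then a rank count. As a nonzero ideal in the integral domain $S$, the module $I_X$ is torsion-free of rank one; from $I_X\cong I'\oplus S(-2)$ we get $1=\rk(I')+1$, so $\rk(I')=0$, and a torsion-free module of rank zero vanishes. Hence $I_X\cong S(-2)$ is principal and $X$ is a quadric hypersurface, contradicting the hypothesis that $X$ has codimension at least $2$. Therefore $W=I_{X,2}$ and $\Syz_2(X)=X$. The only delicate point is the propagation carried out in the third paragraph: it is essential that $(N_2)$, and not merely $(N_1)$, is assumed, since without the vanishing of the higher syzygies $Q_0$ could participate in a nonlinear syzygy and would then fail to split off as a free summand.
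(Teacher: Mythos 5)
Your proof is correct, and its central step is the same as the paper's: both arguments use $(N_2)$ to promote the hypothesis ``the distinguished quadric appears in no \emph{linear} syzygy'' to ``it appears in no syzygy at all'', by observing that $S$-linear combinations of syzygies with vanishing component on that quadric again have vanishing component there. Where you genuinely diverge is the endgame. The paper stays concrete: since $\dim(I_{X,2})\ge 2$, there is a second quadric $Q_2$, and the trivial (Koszul) syzygy $(-Q_2,Q_1,0,\dots,0)$ has nonzero first component, giving an immediate contradiction. You instead pass to module theory: the vanishing of the $Q_0$-row of the presentation matrix splits off a free summand, $I_X\cong I'\oplus S(-2)$, and since $I_X$ is torsion-free of rank one this forces $I'=0$, so $I_X$ is principal, contradicting codimension $\ge 2$. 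These two endgames are the same obstruction seen from two sides: the Koszul relation $Q_1Q_0-Q_0Q_1=0$ is precisely a nonzero element of $I'\cap S\cdot Q_0$, i.e.\ the witness that your direct-sum decomposition cannot exist. The paper's version is more elementary and self-contained; yours is more structural, makes explicit where the codimension hypothesis enters, and cleanly isolates the scheme-theoretic reduction (it suffices that the span $W$ of all involved quadrics equal $I_{X,2}$, which is slightly weaker than the paper's claim that every individual quadric appears in a linear syzygy, but is all that is needed). Both proofs are valid; yours costs a little more machinery (rank, torsion-freeness, the splitting argument) for a somewhat more conceptual contradiction.
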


\begin{proof}
Since $X$ is an intersection of quadrics (the property $(N_2)$ implies also $(N_1)$), it suffices to show that every $Q\in I_{X,2}$  appears in some  linear syzygy $\gamma$.   But   $\dim(I_{X,2}) \ge 2$ and therefore every $Q\in I_{X,2}$ must appear in \emph{some}  syzygy among quadrics (possibly a trivial one).     
     Property $(N_2)$ implies all the syzygies among the quadrics of $I_{X,2}$ are combination of the linear ones, and therefore $Q$ appears in some \emph{linear} syzygy $\gamma$.  Let us give some details on this claim. Choose a basis $Q_1,\ldots,Q_m$ in $I_{X,2}$ such that $Q=Q_1$.  A basis for the linear syzygies is formed by rows of linear foms $R_i=(\ell_{i1},\ldots,\ell_{im})$ with $\sum\ell_{ij}Q_j=0$ for all $i$. If $Q_1$ does not appear in any linear syzygy, then $\ell_{i1}=0$ for all $i$. The row $(-Q_2,Q_1,0,\ldots,0)$ represents a quadratic syzygy between $Q_1$ and $Q_2$ and hence it is a combination of rows $R_i$ with coefficients linear forms. But the first element of any combination of $R_i$ is zero, which represents a contradiction to $Q_2\ne 0$. 
\end{proof}

Proposition \ref{P:n2syz} applies in particular to canonical curves of Cliff$(C)\ge 3$ and genus $g \ge 6$. It is natural to ask to characterize the canonical curves  satisfying $(N_1)$ and such that $\Syz_2(C)=C$.

Obviously, thanks to Proposition  \ref{P:n2syz}, we only have to look among the  curves $C$ that do not satisfy $(N_2)$. Recall that any such curve must be either a nonsingular plane sextic ($g=10$) or 4-gonal (Green's conjecture in this case). Observe also that a general canonical curve of genus 5 is a complete intersection of three quadrics, and thus $\kappa_{21}(C)=0$; hence $\Syz_2(C)=\bP^4 \ne C$. Therefore we may assume $g \ge 6$ from now on.

In what follows we will   need the
 
\begin{lemma}\label{L:k21can}
Let $C\subset \bP^{g-1}$ be a  canonical curve of genus $g \ge 5$, such that $\mathrm{Cliff}(C)\ge 2$. Then we have:
\begin{equation}\label{k21can}
\kappa_{21}(C) = \frac{(g-1)(g-3)(g-5)}{3}
\end{equation}
\end{lemma}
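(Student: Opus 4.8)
The plan is to read off $\kappa_{21}(C)$ from the Hilbert series of the canonical ring $R=\bigoplus_{m\ge 0}H^0(C,mK_C)$, using the fact that for a canonical curve almost all Koszul cohomology is concentrated away from degree three. By Noether's theorem $C$ is projectively normal, so $R$ is the homogeneous coordinate ring of $C\subset\bP^{g-1}$, with Hilbert function $\dim R_0=1$, $\dim R_1=g$ and $\dim R_m=(2m-1)(g-1)$ for $m\ge 2$ (Riemann--Roch, using $h^1(mK_C)=0$ for $m\ge 2$). Writing $S=\mathrm{Sym}(V)$ with $V=H^0(K_C)$ and recalling the identification $\kappa_{pq}(C)=\dim\mathrm{Tor}_p^S(R,\bC)_{p+q}$, I would exploit the standard relation between Hilbert series and graded Betti numbers,
\[
H_R(t)=\frac{N(t)}{(1-t)^g},\qquad N(t)=\sum_{p,q}(-1)^p\kappa_{pq}(C)\,t^{p+q}.
\]

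The key step is to extract the coefficient of $t^3$ from both sides. On the resolution side,
\[
[t^3]N(t)=\kappa_{03}(C)-\kappa_{12}(C)+\kappa_{21}(C)-\kappa_{30}(C).
\]
Two of these terms vanish automatically: $\kappa_{30}(C)=0$ since it is the kernel of the injective Koszul differential $\wedge^3V\to\wedge^2V\otimes V$, and $\kappa_{03}(C)=0$ because $R$ is generated in degree one. The remaining competing term is $\kappa_{12}(C)$, which equals $\beta_{1,3}(R)$, the number of minimal cubic generators of the homogeneous ideal $I_C$.

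This is where the hypothesis enters, and it is essentially the only real point of the argument. Since $\mathrm{Cliff}(C)\ge 2$, the curve $C$ is neither trigonal nor a smooth plane quintic, so by the Enriques--Petri theorem (Theorem (3.c.1) of \cite{mG84}, already invoked in the Introduction) the ideal $I_C$ is generated by quadrics; hence $\kappa_{12}(C)=\beta_{1,3}(R)=0$. With this vanishing the identity collapses to
\[
\kappa_{21}(C)=[t^3]\big(H_R(t)(1-t)^g\big),
\]
an expression that no longer involves the special geometry of $C$. The main obstacle is thus conceptual rather than computational: one must recognize that in degree three the only potentially nonzero Betti number besides $\kappa_{21}$ is the cubic-generator count $\kappa_{12}$, and that the Clifford-index hypothesis forces it to vanish through Enriques--Petri.

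What remains is a routine generating-function computation. Multiplying $H_R(t)=1+gt+(3g-3)t^2+(5g-5)t^3+\cdots$ by $(1-t)^g=1-gt+\binom{g}{2}t^2-\binom{g}{3}t^3+\cdots$ and collecting the four contributions to $t^3$ gives
\[
\kappa_{21}(C)=-\binom{g}{3}+g\binom{g}{2}-3g(g-1)+5(g-1)=\frac{(g-1)(g-3)(g-5)}{3},
\]
where the last equality follows after factoring the cubic $g^3-9g^2+23g-15=(g-1)(g-3)(g-5)$. One can sanity-check the outcome on the complete-intersection cases $g=5$ (value $0$) and $g=6$ (value $5$), which agree with the known Betti tables.
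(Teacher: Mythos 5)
Your proof is correct and is in substance the paper's own argument: the paper deduces \eqref{k21can} from the exact sequence $0 \to K_{2,1}(C) \to H^0(K)\otimes I_{C,2} \to I_{C,3} \to 0$, whose right-exactness is precisely the quadric generation of $I_C$ that you obtain from Enriques--Petri, and whose kernel identification rests on the same projective normality you invoke via Noether's theorem. Your Hilbert-series extraction of the $t^3$-coefficient (after noting $\kappa_{03}=\kappa_{12}=\kappa_{30}=0$) is just a repackaging of the same dimension count $\kappa_{21}(C)=g\dim I_{C,2}-\dim I_{C,3}$ with $\dim I_{C,2}=\binom{g+1}{2}-(3g-3)$ and $\dim I_{C,3}=\binom{g+2}{3}-(5g-5)$.
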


\begin{proof}
It follows from an  easy computation based on the exact sequence of vector spaces:
\[
0 \to K_{2,1}(C) \to H^0(K)\otimes I_{C,2} \to I_{C,3} \to 0
\]
\end{proof}
 Before attacking the problem in general we will consider a few special cases.

 %%%%%%%%%%%%%%%%%%%%%%%%%%%%%%%%%%%%%%%%%%

\section{Canonical curves of genus 6}
\label{sec:genus6}

First recall a few classical well known facts. Let $C \subset \bP^5$ be a general canonical curve of genus 6. The canonical sheaf $\omega_C=\cO_C(K)$ decomposes in five ways as $\omega_C=\cO(D)+\cO(K-D)$ where $|D|$ is a $g^1_4$ and $|K-D|$ is a $g^2_6$. Each $g^2_6$ maps $C$ birationally to a plane irreducible sextic  $\Gamma \subset \bP^2$ having four nodes $P_1, \dots, P_4$. The residual $g^1_4$ $|D|$ is cut by the pencil of conics through $\{P_1, \dots, P_4\}$.  The other four $g^1_4$ are cut by the four pencils of lines through each $P_i$. 

The linear system $|\I_{\{P_1, \dots, P_4\}}(3)|$ of plane cubics containing $P_1, \dots, P_4$ is adjoint to $\Gamma$ and defines $\varphi: \bP^2\da \bP^5$ which maps $\Gamma$ birationally to $C\subset \bP^5$. The blow-up of $\bP^2$ at $P_1, \dots, P_4$ is mapped isomorphically to a Del Pezzo surface $S \subset \bP^5$ containing $C$.  This is the unique Del Pezzo surface containing $C$. In other words all the five $g^2_6$ produce the \emph{same} surface $S$.

Each $|D|$ is cut on $C$ by a one-dimensional family of 4-secant planes whose union is a cubic 3-fold $X$ of minimal degree containing $S$. There are five such 3-folds $X_1, \dots, X_5$ , and 
\[
S = X_1 \cap \dots \cap X_5
\]

In terms of equations we can describe the above picture as follows. Since
\[
\kappa_{11}(S) = h^0(\cO_{\bP^5}(2))- h^0(\bP^2,\I_{2P_1,\dots, 2P_4}(6))= 21-16=5
\]
the ideal $I_S$ is generated by 5 quadrics $Q_1, \dots , Q_5$. They are the pfaffians of a skew--symmetric matrix $A$ of linear forms, and their syzygies are generated by the 5 columns of $A$ 
%(see Mukai ??). 
Therefore 
\[
\kappa_{21}(S)=5
\]
and
\[
\Syz_2(S)=S
\]
because each $Q_i$ appears in some syzygy.

Since 
\[
\kappa_{11}(C) = h^0(\I_C(2)) = \binom{4}{2} = 6
\]
 $I_C$ is generated by 6 quadrics, that can be taken to be $Q_1, \dots , Q_5,Q$ for some $Q$. Therefore $C = S \cap Q$.  The quadric $Q$ is a general one, and therefore it only has the trivial syzygy with each $Q_i$, and this is not generated by the syzygies among the $Q_i$'s. Therefore 
\[
\kappa_{22}(C)=5
\]
Recall that, according to \eqref{k21can}: 
\[
\kappa_{21}(C)=5
\]
and  $K_{21}(S) \subset K_{21}(C)$.  Therefore they are equal and
\[
\Syz_2(C)= \Syz_2(S) = S \ne C
\]
The output of this discussion is that \emph{$C$ satisfies $(N_1)$ but $\Syz_2(C) \ne C$.}  Therefore  in Problem 2 we may assume $g \ge 7$.

The above numerical informations are collected in the  Betti tables of $C$ and $S$.

\begin{center}
$C$:\quad\begin{tabular}{cccccc}
  %\hline
  % after \\: \hline or \cline{col1-col2} \cline{col3-col4} ...
  $\kappa_{0q}$:&1 & -- & -- & -- & -- \\
  %\hline
  $\kappa_{1q}$:& -- & 6 & 5 & -- & -- \\
  %\hline
  $\kappa_{2q}$:& -- & -- & 5 & 6 & -- \\
  %\hline
  $\kappa_{3q}$:& -- & -- & -- & -- & 1 \\
  %\hline
\end{tabular}
 \quad
$S$:\quad\begin{tabular}{cccc}
1&--&--&-- \\
--&5&5&-- \\
--&--&--&1
\end{tabular}
\end{center}

%%%%%%%%%%%%%%%%%%%%%%%%%%%%%%%%%%%%%%%%%%

\section{Plane sextics}
\label{sec:sextics}

A nonsingular plane sextic $\Gamma \subset \bP^2$ has genus 10. The linear system of cubics defines a morphism:
\[
\varphi_3: \bP^2 \to \bP^9
\]
whose image is the Veronese surface $S := \varphi_3(\bP^2)$. Under this morphism $\Gamma$ is mapped to a canonical curve $C \subset S$.   We have:
\[
\kappa_{11}(C) = \binom{8}{2} = 28,  \quad \kappa_{21}(C) = 105
\]
Moreover:
\[
\kappa_{11}(S) = \dim(I_{S,2}) = \binom{9+2}{2}- h^0(\bP^2, \cO(6)) = 27
\]
From these numbers it is apparent that $C$ does not satisfy $(N_2)$ because it is the complete intersection of $S$ with a quadric $Q$:    therefore there are plenty of trivial relations (of degree 2) between $Q$ and the quadrics in $I_{S,2}$ which are not   combination of linear ones.

Further, since $S$ is projectively normal  we have the exact sequence:
\[
0\to K_{2,1}(S) \to V\otimes I_{S,2} \to I_{S,3}\to 0
\]
where $V = H^0(S,\cO_S(1))$ is 10-dimensional.  Computing we get
\[
\dim(I_{S,3}) = \binom{12}{3}- h^0(\bP^2,\cO(9)) = 220 - 55 = 165
\]
and therefore:
\[
\kappa_{21}(S) = 10 \cdot 27 - 165 = 105
\]
Therefore $K_{2,1}(C)= K_{2,1}(S)$. In particular 
\[
C \ne S \subset \Syz_2(S)=\Syz_2(C)
\]
In fact, $S = \Syz_2(S)$ which can by proved by reducing to a hyperplane section of $S$, which is a normal elliptic curve in $\bP^8$.  Therefore. in Problem 2 we may exclude plane sextics.
  
%  \medskip
%  
%  \emph{Note:}  We did not prove that $S = \Syz_2(S)$ but this is seen to be true by reducing to a hyperplane section of $S$, which is a normal elliptic curve in $\bP^8$ (see next section).

%%%%%%%%%%%%%%%%%%%%%%%%%%%%%%%%%%%%%%%%

\section{Canonical curves either lying on a Del Pezzo surface or bielliptic}

Bielliptic canonical curves, respectively canonical curves lying on a Del Pezzo surface,  are complete intersection of  a quadric with a cone $S$ over a normal curve of degree $g-1$ in $\bP^{g-2}$, respectively  with a Del Pezzo surface $S$.   This case includes both the genus 6 case and the plane sextic case ($g=10$).
Since Del Pezzo surfaces exist only in $\bP^N$ for $3 \le N \le 9$ this case adds only three new values of $g$, namely $g=7,8,9$.   On the other hand bielliptic curves exist for all $g \ge 4$. The treatment is very similar to the previous ones and it consists in computing the following:
\[
\kappa_{11}(S) = \binom{g-2}{2}-1 = \kappa_{11}(C) -1
\]
\[
\kappa_{21}(S) = \kappa_{21}(C)
\]
Moreover one proves that $\Syz_2(S) = S$, by reducing to a hyperplane section of $S$, which is a normal elliptic curve in $\bP^{g-2}$. Since $C = S \cap Q$ for some quadric $Q$, one deduces that there exist trivial sygygies involving $Q$ which are not combination of linear ones. Therefore $C$ does not satisfy $(N_2)$. Even more, we have 
\[
\Syz_2(C)= \Syz_2(S) = S
\]
and therefore $\Syz_2(C) \ne C$.
We summarize what we have proved so far.

\begin{prop}
\label{P:excs}
Let $C\subset \bP^{g-1}$ be a canonical curve of genus $g \ge  6$ satisfying $(N_1)$.  Then $\Syz_2(C) \ne C$ in the following cases:
\begin{itemize}
%\item $g=5$. In this case  $\Syz_2(C)=\bP^4$. 

\item[(1)] $C$ is contained in a Del Pezzo surface. In this case  $5 \le g \le 10$. This case includes all generic  curves of genus 5 and 6, and nonsingular plane sextics ($g=10$) and we have $\Syz_2(C)=S$ 
%except for the case $g=5$ (see above).

\item[(2)] $C$ is bielliptic.

\end{itemize}
\end{prop}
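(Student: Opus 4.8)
The plan is to handle both cases by the single mechanism already used in Sections~\ref{sec:genus6} and~\ref{sec:sextics}: in each case $C$ is a quadric section $C=S\cap Q$ of a projectively normal surface $S\subset\bP^{g-1}$, and I want to show that the entire space of second linear syzygies of $C$ is inherited from $S$. In case~(1) $S$ is the anticanonically embedded Del Pezzo surface of degree $g-1$ in $\bP^{g-1}$; such surfaces exist for $3\le g-1\le 9$, but the hypothesis $(N_1)$ excludes $g=4$, leaving $5\le g\le 10$, and this range produces the quintic Del Pezzo carrying the generic genus--$6$ curve, the blow--ups of $\bP^2$ and $\bP^1\times\bP^1$, and the cubic Veronese $\varphi_3(\bP^2)$ carrying the plane sextic ($g=10$). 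In case~(2) $S$ is the cone over an elliptic normal curve $E\subset\bP^{g-2}$ of degree $g-1$, the cone on which every bielliptic canonical curve lies, with $C$ meeting each line of the ruling twice, so again $C=S\cap Q$. In all cases $S$ is arithmetically Cohen--Macaulay of dimension $2$.

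First I would identify $K_{2,1}(S)$ with $K_{2,1}(C)$. From $\kappa_{11}(C)=\binom{g-2}{2}$ and $\kappa_{11}(S)=\binom{g-2}{2}-1$ one gets $I_{C,2}=I_{S,2}\oplus\langle Q\rangle$, so the inclusion $I_{S,2}\subset I_{C,2}$ induces an inclusion $K_{2,1}(S)\hookrightarrow K_{2,1}(C)$, a linear relation among the quadrics of $I_{S,2}$ being a fortiori one among the quadrics of $I_{C,2}$. The extra quadric $Q$ contributes no \emph{new} linear syzygy: since $C=S\cap Q$ is a complete intersection inside $S$, the only relations involving $Q$ are the trivial ones $Q_iQ-QQ_i$, which lie in degree $2$, not $1$. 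To turn this into equality I would compare dimensions: from the sequence $0\to K_{2,1}(S)\to V\otimes I_{S,2}\to I_{S,3}\to 0$ (with $\dim V=g$) one computes $\kappa_{21}(S)$ and checks that it equals the value $\kappa_{21}(C)=(g-1)(g-3)(g-5)/3$ furnished by Lemma~\ref{L:k21can}. With the inclusion this forces $K_{2,1}(S)=K_{2,1}(C)$, whence by the definition of the syzygy scheme
\[
\Syz_2(C)=\bigcap_{0\ne\gamma\in K_{2,1}(C)}\Syz(\gamma)=\bigcap_{0\ne\gamma\in K_{2,1}(S)}\Syz(\gamma)=\Syz_2(S).
\]

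The heart of the matter, and the step I expect to be the main obstacle, is $\Syz_2(S)=S$. Here I would pass to a general hyperplane section $E=S\cap H$, which is an elliptic normal curve of degree $g-1$ in $\bP^{g-2}$ (for the cone, a section by a hyperplane avoiding the vertex). Because $S$ is arithmetically Cohen--Macaulay, a general linear form is a non--zero--divisor on its homogeneous coordinate ring, so $S$ and $E$ have the \emph{same} graded Betti numbers. For $g\ge 6$ we have $\deg E=g-1\ge 5$, and a classical result guarantees that an elliptic normal curve of degree $\ge 5$ satisfies property $(N_2)$; transporting the vanishing of Betti numbers back to $S$ shows that $S$ too satisfies $(N_2)$. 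Since $S$ is projectively normal of codimension $\ge 2$, Proposition~\ref{P:n2syz} then yields $\Syz_2(S)=S$. For the cone this is even more transparent, its resolution being that of $E$ with one extra polynomial variable, so that $\Syz_2(S)$ is literally the cone over $\Syz_2(E)=E$. The delicate point is precisely the transfer of $(N_2)$ across the hyperplane section, i.e. the Cohen--Macaulayness of $S$ together with the genericity of $H$; the rest is bookkeeping.

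Finally I would assemble the two steps: $\Syz_2(C)=\Syz_2(S)=S$ is a surface, hence strictly contains the curve $C$, so $\Syz_2(C)\ne C$, and indeed $\Syz_2(C)=S$ in both cases. The genus--$5$ curves listed in case~(1) have already been disposed of separately: there \eqref{k21can} gives $\kappa_{21}(C)=0$, so $K_{2,1}(C)=0$ and $\Syz_2(C)=\bP^4\ne C$ trivially, consistently with the generic genus--$5$ curve being a complete intersection of three quadrics. This disposes of all cases.
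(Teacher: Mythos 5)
Your proposal is correct and takes essentially the same route as the paper: write $C=S\cap Q$ with $S$ the Del Pezzo surface or elliptic cone, show $K_{2,1}(C)=K_{2,1}(S)$ via the counts $\kappa_{11}(S)=\kappa_{11}(C)-1$ and $\kappa_{21}(S)=\kappa_{21}(C)=(g-1)(g-3)(g-5)/3$, prove $\Syz_2(S)=S$ by reducing to a hyperplane section of $S$ (an elliptic normal curve in $\bP^{g-2}$), and conclude $\Syz_2(C)=\Syz_2(S)=S\ne C$. You in fact fill in the details the paper leaves implicit in that reduction step, namely the transfer of Betti numbers across a general hyperplane section of the arithmetically Cohen--Macaulay surface $S$ and property $(N_2)$ for elliptic normal curves of degree $g-1\ge 5$, which then feeds into Proposition~\ref{P:n2syz}.
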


In the next two sections, we prove that the converse also holds.

%%%%%%%%%%%%%%%%%%%%%%%%%%%%%%%%%%%%%%

\section{Curves in 3-dimensional scrolls}
\label{sec:scrolls}

We denote by $P= \bC[Z_1, \dots, Z_g]$ the polynomial ring in $g$ variables. 
Assume $g \ge 6$ and let $X \subset \bP^{g-1}$ be a rational normal scroll of dimension 3 and minimal degree $g-3$. Then 
\[
X = \bP(\mathcal{E})
\]
where   
$$\mathcal{E}:= \cO_{\bP^1}(k_1)\oplus \cO_{\bP^1}(k_2)\oplus \cO_{\bP^1}(k_3)
$$
with
\begin{equation}\label{E:kappas1}
k_1+k_2+k_3 = g-3 = \deg(X), \quad 0\le k_1 \le k_2 \le k_3
\end{equation}
We denote by $H$ a hyperplane section of $X$ and by $F\cong \bP^2$ a fibre of the projection $\pi:X \to \bP^1$. Then $\Pic(X)= \bZ H\oplus \bZ F$ and we have:
\[
H^3=g-3, \quad H^2F=1, \quad HF^2=F^3=0
\]
The hyperplane line bundle $\cO_X(H)$ coincides with the tautological line bundle $\cO_X(1)$. Moreover a canonical divisor is given by:
\[
K_X = -3H+(g-5)F
\]
The scroll $X$ is arithmetically Cohen-Macaulay (aCM) with curve sections rational normal of degree $g-3$ in $\bP^{g-3}$.  Therefore the graded ideal $I_X\subset P$   is generated by $\binom{g-3}{2}$  quadrics, which can be described in a determinantal way as follows. Given $k \ge 1$ and $0\le \alpha\le g-k-1$ we let $M_k(\alpha)$ be the $2\times k$ array:
\[
M_k(\alpha) = \left(
\begin{matrix} Z_{\alpha+1}&\cdots&Z_{\alpha+k} \\ 
Z_{\alpha+2}&\cdots&Z_{\alpha+k+1} 
\end{matrix}
\right)
\]
Then, assuming  that  $k_1 \ge 1$, we can consider the $2\times (g-3)$-matrix of indeterminates:
\begin{align}
M:= &\begin{pmatrix} M_{k_1}(0) & M_{k_2}(k_1+1)& M_{k_3}(k_1+k_2+2) \end{pmatrix} \notag\\
&=\begin{pmatrix}Z_{1}&\cdots&Z_{k_1} &Z_{k_1+2}&\cdots&Z_{k_1+k_2+1}&Z_{k_1+k_2+3}&\cdots&Z_{g-1}\\
Z_2&\cdots&Z_{k_1+1}&Z_{k_1+3}&\cdots&Z_{k_1+k_2+2}&Z_{k_1+k_2+4}&\cdots&Z_g \end{pmatrix}
\notag\end{align}
The $2\times 2$ minors of $M$ define $\binom{g-3}{2}$ linearly independent quadrics which generate $I_X$.

If $k_1=0$ but $k_2 \ge 1$ we obtain a similar $2\times (g-3)$ matrix consisting of only two arrays and involving the variables $Z_1, \dots, Z_{g-1}$.  In this case the quadrics obtained do not involve the variable $Z_1$ and therefore $X$ is a cone with vertex $(1:0:\cdots:0)$.

All this is very explicit and well known. We have:
\[
h^0(X,\cO_X(2H)) = \binom{g+1}{2}- \binom{g-3}{2} = 4g-6
\]
Moreover 
\begin{equation}\label{E:quadrX}
h^0(X,\cO_X(2H-\lambda F)) = h^0(\bP^1,S^2\mathcal{E}(-\lambda)) \ge 4g-6(\lambda+1)
\end{equation}
because $S^2\mathcal{E}$ has rank six.
Observe that this number is positive if  
\begin{equation}\label{E:lambda1}
3\lambda \le 2g-4
\end{equation} 
The inequality \eqref{E:quadrX} can be strict even when $\lambda$ satisfies \eqref{E:lambda1}.

\begin{lemma}\label{L:syz1}
Let $p_1,p_2\in \bP^1$ be distinct points and let $F_i=\pi^{-1}(p_i)$. Let $Q_1\in H^0(X,\cO_X(2H-F_1))$. Then there is $Q_2\in H^0(X,\cO_X(2H-F_2))$  such that any two liftings 
$\wt Q_1,\wt Q_2\in H^0(\bP^{g-1},\cO_{\bP^{g-1}}(2))$   of $Q_1$ and $Q_2$   are related by a linear syzygy and have the same vanishing locus away from fibre components.
\end{lemma}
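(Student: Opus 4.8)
The plan is to exploit the factorization of a quadric section vanishing on a fibre, working throughout on the abstract projective bundle $X=\bP(\mathcal{E})$ (which is smooth even when its image in $\bP^{g-1}$ is a cone, so that fibres are Cartier and the factorizations below are legitimate). Fix linear forms $s_1,s_2\in H^0(\bP^1,\cO_{\bP^1}(1))$ with $\mathrm{div}(s_i)=p_i$, so that $\pi^*s_1,\pi^*s_2$ form a basis of $H^0(X,\cO_X(F))=H^0(\bP^1,\cO_{\bP^1}(1))$ and $\pi^*s_i$ cuts out $F_i$. Since $Q_1$ vanishes on $F_1$, I would write
\[
Q_1=(\pi^*s_1)\cdot R,\qquad R\in H^0(X,\cO_X(2H-F)),
\]
and then simply \emph{define} the partner by replacing the fibre:
\[
Q_2:=(\pi^*s_2)\cdot R\in H^0(X,\cO_X(2H-F_2)).
\]
By construction $Q_1$ and $Q_2$ differ only in their fibre component, both cutting out $\mathrm{div}(R)\in|2H-F|$ away from $F_1\cup F_2$; this already yields the assertion about vanishing loci.

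For the linear syzygy, the key is to manufacture $\ell_1,\ell_2$ from $s_1,s_2$ inside $H^0(X,\cO_X(H))=H^0(\bP^1,\mathcal{E})=\bigoplus_i H^0(\bP^1,\cO_{\bP^1}(k_i))$, which is canonically $H^0(\bP^{g-1},\cO(1))$. Because $\deg(X)=g-3\ge3$ we have $k_3\ge1$; fix an index $j$ with $k_j\ge1$ and a nonzero $u\in H^0(\bP^1,\cO_{\bP^1}(k_j-1))$, and let $\ell_1,\ell_2\in H^0(X,\cO_X(H))$ be the sections supported in the $j$-th summand equal to $u\,s_1$ and $u\,s_2$. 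The multiplication map $H^0(\cO_X(H))\otimes H^0(\cO_X(F))\to H^0(\cO_X(H+F))=\bigoplus_i H^0(\cO_{\bP^1}(k_i+1))$ is summand-wise polynomial multiplication, so in the $j$-th summand both $\ell_1\cdot\pi^*s_2$ and $\ell_2\cdot\pi^*s_1$ equal $u\,s_1s_2$, while all other summands vanish. Hence
\[
\ell_1\cdot\pi^*s_2=\ell_2\cdot\pi^*s_1\quad\text{in } H^0(X,\cO_X(H+F)),
\]
and multiplying by $R$ gives $\ell_1Q_2-\ell_2Q_1=(\ell_1\pi^*s_2-\ell_2\pi^*s_1)R=0$ in $H^0(X,\cO_X(3H))$.

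It remains to transfer this to $\bP^{g-1}$. For any liftings $\wt Q_1,\wt Q_2$ the cubic $\ell_1\wt Q_2-\ell_2\wt Q_1$ restricts to $\ell_1Q_2-\ell_2Q_1=0$ on $X$, hence lies in $I_{X,3}$; since $I_X$ is generated by quadrics, $I_{X,3}=V\cdot I_{X,2}$, so the relation can be written $\ell_2\wt Q_1-\ell_1\wt Q_2+\sum_k m_kG_k=0$ with the $G_k$ the scroll quadrics and the $m_k$ linear, i.e.\ a genuine linear syzygy relating $\wt Q_1$ and $\wt Q_2$. This relation is independent of the chosen liftings, because two liftings of $Q_i$ differ by an element of $I_{X,2}$ and $\ell_i\cdot I_{X,2}\subseteq I_{X,3}$.

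I expect the delicate point to be precisely the bigraded identity $\ell_1\pi^*s_2=\ell_2\pi^*s_1$: it is here that the rational–normal–scroll structure is indispensable, since it forces $H^0(\cO_X(H))$ to contain suitable pullbacks of the linear forms on the base $\bP^1$ (embedded in a summand with $k_j\ge1$) and makes the multiplication into $H^0(\cO_X(H+F))$ transparent. The other mild subtlety is the factorization $Q_1=(\pi^*s_1)R$ in the cone case $k_1=0$, which is why I insist on arguing on the smooth model $\bP(\mathcal{E})$ rather than on its possibly singular image.
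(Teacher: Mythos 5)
Your proof is correct, and its first half is the same construction as the paper's, expressed intrinsically: the paper chooses coordinates so that $F_1$ and $F_2$ are cut out by the two rows $(Y_1,\ldots,Y_{g-3})$ and $(W_1,\ldots,W_{g-3})$ of the scroll matrix $M$, writes $\wt Q_1=\sum_j A_jY_j$, and defines $\wt Q_2:=\sum_j A_jW_j$ by ``rolling factors''; on $X$ this is exactly your replacement of the factor $\pi^*s_1$ by $\pi^*s_2$ in $Q_1=(\pi^*s_1)R$, so the two partners $Q_2$ coincide. Where you genuinely diverge is in producing the linear syzygy. The paper exhibits it explicitly: taking $\wt H_1=\sum_k\alpha_kW_k$ and $\wt H_2=\sum_k\alpha_kY_k$ with general constants $\alpha_k$ (a rolled pair of linear forms, like your $\ell_1,\ell_2$, but spread over all columns rather than concentrated in one summand of $\mathcal{E}$), one checks the polynomial identity $\wt H_1\wt Q_1-\wt H_2\wt Q_2=\sum_{j<k}(A_j\alpha_k-A_k\alpha_j)M_{jk}$, so the coefficients of the scroll quadrics in the syzygy are written down in closed form. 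You instead prove the relation $\ell_1Q_2=\ell_2Q_1$ upstairs on $\bP(\mathcal{E})$ and then invoke quadratic generation of $I_X$, i.e.\ $I_{X,3}=V\cdot I_{X,2}$, to convert the cubic $\ell_1\wt Q_2-\ell_2\wt Q_1\in I_{X,3}$ into a linear syzygy whose scroll-quadric part is left implicit. Your route buys a coordinate-free argument in which the cone case $k_1=0$ needs no separate discussion (everything happens on the smooth model, where $F_i$ is Cartier and the factorization is legitimate), at the cost of appealing to the quadratic generation of $I_X$ (which the paper does establish, so this is a legitimate input); the paper's route buys a self-contained explicit identity that records exactly which minors $M_{jk}$ enter the syzygy. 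Both outputs serve equally well in Corollary \ref{C:canonX}. One cosmetic remark: your closing sentence about independence of the liftings is redundant, since your degree-3 argument already applies verbatim to arbitrary liftings $\wt Q_1,\wt Q_2$, which is all the statement asks.
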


\begin{proof}
The proof  is  related to the \emph{procedure of rolling factors}  \cite[pages 95--96]{jS03}, attributed to M. Reid. 

We may assume that   $F_1$   and $F_2$ are   the 2-planes   whose equations in $\bP^{g-1}$ are the entries of the first row and of the second row respectively of the matrix $M$. For simplicity let's represent $M$ in the following form:
$$
\begin{pmatrix}
Y_1&Y_2&\dots&Y_{g-3}\\W_1&W_2&\cdots&W_{g-3}
\end{pmatrix}
$$
where the $Y_j$'s and the $W_j$'s are linear forms. Set:
$$
M_{jk}:= Y_jW_k-Y_kW_j
$$
for all $1 \le j < k \le g-3$.
The hypothesis on $Q_1$ then implies that
$$
\wt Q_1 =  \sum_j A_j Y_j
$$
where the $A_j$'s   are linear forms. Take:
$$
\wt Q_2 := \sum_j A_j W_j
$$
and consider the following linear forms:
$$
\wt H_1= \sum_k \alpha_k W_k, \quad \wt H_2 = \sum_k \alpha_k Y_k
$$
where the $\alpha_k\in \mathbb{C}$ are general.  Then:
$$
\wt H_2\wt Q_1 - \wt H_1\wt Q_2 = \sum_{j<k} \Delta_{jk}M_{jk}
$$
where $\Delta_{jk}= A_j\alpha_k-A_k\alpha_j$. 
This is a linear sygyzy  involving  $\wt Q_1,\wt Q_2$. The last assertion is  obvious.
\end{proof}

\begin{prop}\label{P:canonX}
 Consider non-negative integers $a,b$ such that $a+b=g-5$ and
$$
H^0(X,\cO_X(2H-aF))\ne 0 \ne H^0(X,\cO_X(2H-bF))
$$
and assume that    
\[
C := Q_1 \cap Q_2 \subset X
\]
is a nonsingular curve for some $Q_1\in |2H-aF|$,  $Q_2\in |2H-bF|$. Then \begin{itemize}
\item[(i)] 
$C$ is a 4-gonal canonical curve of genus $g$.  
\item[(ii)] A basis of $I_{C,2}$ is given by the $\binom{g-3}{2}$  minors of the matrix $M$ plus   $g-3$ quadrics which are liftings of a basis of the subspace of $H^0(X,\cO_X(2H))$  generated by $\pi^*H^0(\bP^1,\cO(a))\cdot Q_1$ and $\pi^*H^0(\bP^1,\cO(b))\cdot Q_2$.
\end{itemize}
\end{prop}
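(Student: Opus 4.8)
The plan is to prove the two assertions separately, deducing (i) from adjunction on the scroll $X$ and (ii) from the complete intersection structure of $C$ inside $X$ together with the projective normality of $X$. For (i), I would first compute the canonical class of $C$ by adjunction. Since $C = Q_1\cap Q_2$ is the complete intersection of $Q_1\in|2H-aF|$ and $Q_2\in|2H-bF|$ in the threefold $X$, one has
\[
K_C = \bigl(K_X+Q_1+Q_2\bigr)\big|_C = \bigl(-3H+(g-5)F+(2H-aF)+(2H-bF)\bigr)\big|_C .
\]
Using $a+b=g-5$ the fibre terms cancel, so $K_C=H|_C$ and $C$ is canonically embedded. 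A direct intersection computation with $H^3=g-3$, $H^2F=1$, $HF^2=F^3=0$ gives $\deg K_C = H\cdot(2H-aF)(2H-bF)=2g-2$, confirming genus $g$; and since $X$ is aCM, $C$ is projectively normal, so $|H|_C|$ is the full canonical system. Finally the ruling cuts $C\cdot F=4$ points, yielding a base-point-free $g^1_4$; the gonality is then exactly $4$ because a $g^1_3$ or $g^1_2$ would force $C$ onto a scroll of dimension $\le 2$, contrary to the three-dimensional scroll geometry.

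For (ii), I would begin with a dimension count. By Riemann--Roch $h^0(\cO_C(2))=h^0(2K_C)=3(g-1)$, so
\[
\dim I_{C,2}=\binom{g+1}{2}-3(g-1)=\binom{g-2}{2}.
\]
Since $I_X$ is generated by the $\binom{g-3}{2}$ minors of $M$, and $\binom{g-2}{2}-\binom{g-3}{2}=g-3$, the task reduces to producing and identifying exactly $g-3$ further independent quadrics. The key tool is the ideal-sheaf sequence on $\bP^{g-1}$,
\[
0\to \I_X(2)\to \I_C(2)\to \I_{C/X}(2H)\to 0,
\]
which, using $H^1(\I_X(2))=0$ from the aCM property, gives on global sections
\[
0\to I_{X,2}\to I_{C,2}\to H^0\bigl(\I_{C/X}(2H)\bigr)\to 0.
\]
The extra quadrics are thus liftings of a basis of $H^0(\I_{C/X}(2H))$, and such liftings exist because projective normality of $X$ makes $H^0(\cO_{\bP^{g-1}}(2))\to H^0(\cO_X(2H))$ surjective.

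It then remains to compute $H^0(\I_{C/X}(2H))$ and recognize the stated generators. Resolving $\I_{C/X}$ by the Koszul complex of $(Q_1,Q_2)$ and twisting by $2H$ yields, via $2H-Q_1=aF$, $2H-Q_2=bF$ and $2H-Q_1-Q_2=-2H+(g-5)F$,
\[
0\to \cO_X(-2H+(g-5)F)\to \cO_X(aF)\oplus \cO_X(bF)\to \I_{C/X}(2H)\to 0.
\]
Since $\cO_X(-2H)$ restricts to $\cO_{\bP^2}(-2)$ on each fibre, all $R^i\pi_*\cO_X(-2H+cF)$ vanish, so $H^i(\cO_X(-2H+(g-5)F))=0$ for every $i$. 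Hence the middle map is an isomorphism on global sections, giving
\[
H^0(\I_{C/X}(2H))\cong H^0(\cO_X(aF))\oplus H^0(\cO_X(bF))=\pi^*H^0(\bP^1,\cO(a))\cdot Q_1\ \oplus\ \pi^*H^0(\bP^1,\cO(b))\cdot Q_2,
\]
of dimension $(a+1)+(b+1)=g-3$, which exhibits the claimed basis.

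The computations of canonical class, degree and dimensions are routine; the step I expect to require most care is the cohomological identification of $H^0(\I_{C/X}(2H))$, specifically the vanishing $H^i(\cO_X(-2H+(g-5)F))=0$ obtained from the fibrewise vanishing on $\bP^2$, since it simultaneously secures the \emph{independence} of the new quadrics (injectivity) and the fact that the Koszul generators \emph{span} all of $H^0(\I_{C/X}(2H))$ (surjectivity). A secondary subtlety is excluding gonality $<4$, where the three-dimensionality of the minimal scroll through $C$ is the essential input.
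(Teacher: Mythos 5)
Your proof is correct, and for part (ii) it takes a genuinely different route from the paper's. The paper's own argument is a two-line dimension count: the subspaces $\pi^*H^0(\bP^1,\cO(a))\cdot Q_1$ and $\pi^*H^0(\bP^1,\cO(b))\cdot Q_2$ have zero intersection (a common nonzero element would contradict the transversality of $Q_1$ and $Q_2$), so together with the minors of $M$ one gets $\binom{g-3}{2}+(g-3)=\binom{g-2}{2}=\dim(I_{C,2})$ independent quadrics, and that number forces them to be a basis. You instead compute $H^0(\I_{C/X}(2H))$ exactly, via the Koszul resolution
\[
0\to \cO_X(-2H+(g-5)F)\to \cO_X(aF)\oplus \cO_X(bF)\to \I_{C/X}(2H)\to 0
\]
and the vanishing of all cohomology of $\cO_X(-2H+(g-5)F)$ (fibrewise $\cO_{\bP^2}(-2)$), then lift through the ideal sequence using $H^1(\I_X(2))=0$. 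This costs more machinery but buys more: independence of the two families comes for free from injectivity on global sections (no transversality argument needed), spanning is proved directly rather than deduced from the a priori value of $\dim(I_{C,2})$, and you obtain the identification $\pi_*\I_{C/X}(2H)\cong\cO_{\bP^1}(a)\oplus\cO_{\bP^1}(b)$, a statement the paper only invokes later by citing Schreyer. Your adjunction computation for (i) fills in what the paper dismisses as ``a straightforward calculation.''

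Two points in (i) need tightening. First, your exclusion of lower gonality --- ``a $g^1_3$ or $g^1_2$ would force $C$ onto a scroll of dimension $\le 2$, contrary to the three-dimensional scroll geometry'' --- is not a valid inference: lying on a three-dimensional scroll does not prevent $C$ from also lying on a two-dimensional one. The correct argument: hyperelliptic is excluded because $C$ is embedded by $K_C$; if $C$ were trigonal, every trisecant line would meet each quadric through $C$ in at least three points and hence lie on it, so the two-dimensional trisecant scroll would be contained in the intersection of all quadrics containing $C$, which by your part (ii) is $X\cap\wt Q_1\cap\wt Q_2=C$ --- a contradiction of dimensions. Second, the claim ``since $X$ is aCM, $C$ is projectively normal'' is unjustified as stated ($C$ is not a complete intersection of hypersurface sections of $X$, so aCM-ness does not transfer by a one-liner); but it is also unneeded: nondegeneracy of $C$ (which follows from $H^0(\I_{C/X}(H))=0$, by the same fibrewise vanishing applied to the Koszul complex twisted by $H$) together with $h^0(K_C)=g$ already shows the embedding system is the complete canonical one, and $\dim(I_{C,2})=\binom{g-2}{2}$ then follows from Noether's theorem --- or, bypassing Noether entirely, from your own exact sequences.
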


\begin{proof} (i) follows from a straightforward calculation.

(ii) The subspaces $\pi^*H^0(\bP^1,\cO(a))\cdot Q_1$ and $\pi^*H^0(\bP^1,\cO(b))\cdot Q_2$ have dimensions $a+1$ and $b+1$ respectively, and have zero intersection because otherwise, 
%by Lemma \ref{L:syz1}, 
$Q_1$ and $Q_2$ would not intersect transversally.  Therefore the space they generate has dimension $g-3=(a+1)+(b+1)$. Since
$$
\binom{g-3}{2}+(g-3) = \binom{g-2}{2} = \dim(I_{C,2})
$$
we are done.
\end{proof}

\begin{corollary}\label{C:canonX}
Let $C\subset \bP^{g-1}$ be as in the Proposition and assume that both $a$ and $b$ are positive. Then $\Syz_2(C)=C$.
\end{corollary}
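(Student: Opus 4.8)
The plan is to establish the reverse inclusion $\Syz_2(C)\subseteq C$, since $C\subseteq \Syz_2(C)$ holds automatically. Recall that $\Syz_2(C)=V(W)$, where $W\subseteq I_{C,2}$ is the linear span of all quadrics involved in some linear syzygy $\gamma\in K_{2,1}(C)$. As $C$ is $4$-gonal, hence non-trigonal, the Enriques--Petri theorem gives property $(N_1)$, so that $V(I_{C,2})=C$. It therefore suffices to prove $W=I_{C,2}$, i.e. that each element of the basis of $I_{C,2}$ displayed in Proposition \ref{P:canonX}(ii) is involved in a linear syzygy of $C$.

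The $\binom{g-3}{2}$ minors $M_{jk}$ can be disposed of immediately: they generate $I_X$, and $X$, being a rational normal scroll, satisfies $(N_2)$, so Proposition \ref{P:n2syz} yields $\Syz_2(X)=X$ and hence each $M_{jk}$ occurs in a linear syzygy of $X$; since $I_X\subseteq I_C$, the same relation is a linear syzygy of $C$. This places all minors in $W$.

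For the remaining $g-3$ quadrics I would invoke the rolling-factors Lemma \ref{L:syz1}. Choosing coordinates $u,v$ on $\bP^1$ so that the first-row fibre $\{Y_1=\cdots=Y_{g-3}=0\}$ and the second-row fibre $\{W_1=\cdots=W_{g-3}=0\}$ lie over $[0:1]$ and $[1:0]$, take the basis $R_i:=\pi^*(u^{a-i}v^i)\cdot Q_1$, $0\le i\le a$, of $\pi^*H^0(\bP^1,\cO(a))\cdot Q_1$. For $0\le i\le a-1$ the factor $u^{a-i}$ vanishes at $[0:1]$, so $R_i$ vanishes on the first-row fibre and Lemma \ref{L:syz1} applies with this $R_i$ in the role of $Q_1$. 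The crucial point is to identify the partner: writing a lifting as $\wt R_i=\sum_j A_jY_j$ and using $Y_1W_k=Y_kW_1$ on $X$, one gets $\sum_j A_jW_j=\wt R_i\cdot(W_1/Y_1)$ on $X$, and since $W_1/Y_1=Z_2/Z_1=v/u$ there, this equals $\pi^*(u^{a-i-1}v^{i+1})\cdot Q_1=R_{i+1}$. Thus the partner is again of the form $\pi^*(\text{monomial})\cdot Q_1$, hence lies in $I_{C,2}$, and Lemma \ref{L:syz1} produces a genuine element of $K_{2,1}(C)$ involving $R_i$ and $R_{i+1}$ together with minors. Letting $i$ run from $0$ to $a-1$ chains $R_0,\dots,R_a$ together and places all of them in $W$; the hypothesis $a>0$ is exactly what makes this chain nonempty. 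The symmetric argument, using $b>0$, handles the basis $\pi^*H^0(\bP^1,\cO(b))\cdot Q_2$. Combining the three steps, $W$ contains the entire basis of Proposition \ref{P:canonX}(ii), so $W=I_{C,2}$ and $\Syz_2(C)=C$.

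The main obstacle is precisely this partner identification: a priori Lemma \ref{L:syz1} only guarantees that the partner is a quadric section of $X$ vanishing on a fibre, and there is no reason for such a section to vanish on $C$. What rescues the argument is the multiplicative identity $\pi^*(v/u)\cdot R_i=R_{i+1}$ coming from rolling factors, which keeps the partner inside $\pi^*H^0(\bP^1,\cO(a))\cdot Q_1\subseteq I_{C,2}$. This is also where positivity enters: if $a=0$ then $Q_1\in|2H|$ vanishes on no fibre and cannot be fed into Lemma \ref{L:syz1}, so it drops out of $W$ --- consistent with the Del Pezzo and bielliptic cases of Proposition \ref{P:excs}, where $\Syz_2(C)\ne C$.
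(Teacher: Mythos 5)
Your proposal is correct and follows essentially the same route as the paper: the minors are handled by the determinantal (Eagon--Northcott) syzygies of the scroll, and the remaining $g-3$ quadrics are chained in consecutive pairs via Lemma \ref{L:syz1}, with positivity of $a$ and $b$ ensuring each basis element $\pi^*(u^{a-i}v^i)\cdot Q_1$ vanishes on a fibre. Your explicit identification of the rolling-factors partner as $R_{i+1}$ is precisely the content the paper leaves implicit in Lemma \ref{L:syz1} (its ``same vanishing locus away from fibre components'' clause serves the same purpose of keeping the partner inside $I_{C,2}$), so the two arguments coincide in substance.
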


\begin{proof}
The condition $a > 0$ implies, by Lemma \ref{L:syz1}, that we can choose a basis of 
$\pi^*H^0(\bP^1,\cO(a))\cdot Q_1$ whose liftings in $I_{C,2}$ are related in pairs by a linear syzygy. Similarly for   $\pi^*H^0(\bP^1,\cO(b))\cdot Q_2$.  Since   the minors of $M$ are also related by linear syzygies, we have $\Syz_2(C)=C$.
\end{proof}

\begin{remark}\rm
Let $Q_1$ and $Q_2$ be as in Proposition \ref{P:canonX}. Then for any liftings $\wt Q_1,\wt Q_2\in H^0(\bP^{g-1},\cO_{\bP^{g-1}}(2))$, of $Q_1$, $Q_2$, respectively, the trivial syzygy 
$$
\wt Q_2\cdot \wt Q_1 - \wt Q_1 \cdot \wt Q_2 =0
$$
is not a combination of linear ones and therefore it is responsible for the failure of $(N_2)$ on $C$. Obviously, it suffices to check this fact modulo $I_X$. Hence, we will reduce everything modulo $I_X$ and we replace $I_C$ by $I_{C/X}$.
Recall that the liftings of $Q_1$ are parameterized by $\pi^*H^0(\bP^1,\cO(a))\cdot Q_1$ and the liftings of $Q_2$ are parameterized by $\pi^*H^0(\bP^1,\cO(b))\cdot Q_2$ and their classes modulo $I_X$ generate together the $(g-3)$-dimensional subspace   
$$
I_{C/X,2}=[\pi^*H^0(\bP^1,\cO(a))\cdot Q_1]\oplus [\pi^*H^0(\bP^1,\cO(b))\cdot Q_2]
$$
of $H^0(X,\cO_X(2H))$.   Choose bases $Q_{01},\ldots,Q_{a1}\in \pi^*H^0(\bP^1,\cO(a))\cdot Q_1$, respectively, $Q_{02},\ldots,Q_{b2}\in \pi^*H^0(\bP^1,\cO(b))\cdot Q_2$ such that $Q_{01}=\wt Q_1\mbox{ mod }I_X$ and $Q_{02}=\wt Q_2\mbox{ mod }I_X$.

An easy computation gives $h^0(X,\cO_X(3H))= 10g-20$ and therefore:
$$
\dim(I_{C/X,3})= h^0(X,\cO_X(3H))-h^0(C,\cO_C(3K))=5(g-3)
$$
The space of linear sygygies on $I_{C/X,2}$ has dimension:
$$
g\cdot \dim(I_{C/X,2})- \dim(I_{C/X,3}) = g(g-3)-5(g-3)= (g-5)(g-3)
$$
On the other hand the linear syzygies involving only $Q_{01},\ldots,Q_{a1}$ are those in the kernel of
$$
H^0(X,\cO_X(H))\times \pi^*H^0(\bP^1,\cO(a)) \to H^0(X,\cO_X(H+aF))
$$
Since this map is surjective (easy to check) its kernel has dimension 
$$
g(a+1)-(g+3a)= a(g-3)
$$
Similarly the kernel of the map giving the syzygies involving only $Q_{02},\ldots,Q_{b2}$ has dimension $b(g-3)$.
We get altogether a space of linear syzygies of dimension $(a+b)(g-3)=(g-5)(g-3)$.  Comparing with the previous computation we see that these account for \emph{all} the linear syzygies among $I_{C/X,2}$.  In particular, any linear syzygy 
\[
\sum L_{i1}Q_{i1}+\sum L_{j2}Q_{j2}=0
\]
will split into two linear syzygies
\[
\sum L_{i1}Q_{i1}=0,\ \sum L_{j2}Q_{j2}=0.
\]
This implies in particular that no combination of linear syzygies can produce the quadratic syzygy $Q_{01}Q_{02}-Q_{02}Q_{01}=0$. Indeed, if
\[
(Q_{02},0,\ldots,0,-Q_{01},0,\ldots,0)=\sum\ell_kR_k
\]
with 
\[
R_k=(L^{(k)}_{01},\ldots,L^{(k)}_{a1},L^{(k)}_{02},\ldots,L^{(k)}_{b2})
\]
such that $\sum L^{(k)}_{i1}Q_{i1}+\sum L^{(k)}_{j2}Q_{j2}=0$, it will give separate linear syzygies
$\sum L^{(k)}_{i1}Q_{i1}=0$ and $\sum L^{(k)}_{j2}Q_{j2}=0$ which eventually implies $Q_{01}Q_{02}=0$, contradiction.
\end{remark}

%%%%%%%%%%%%%%%%%%%%%%%%%%%%%%%%%%%%%%%%%%%%

\section{The second syzygy scheme of a 4-gonal canonical  curve}
\label{sec:4gonal}

Consider $C\subset\mathbb P^{g-1}$ a canonical 4--gonal curve of genus $g$ and choose $L$ a base-point-free $g^1_4$ on $C$. The scroll defined by $L$ is by definition \cite{fS86}
\[
X:=\bigcup_{D\in|L|}\langle D\rangle.
\]
Let us write $X=\mathbb P(\mathcal O_{\mathbb P^1}(k_1)\oplus\mathcal O_{\mathbb P^1}(k_2)\oplus\mathcal O_{\mathbb P^1}(k_3))$ with $0\le k_1\le k_2\le k_3$ and $k_1+k_2+k_3=g-3$ and denote as usual $H$ a hyperplane section of $X$ and $F$ a fibre over a point $p\in\mathbb P^1$. Note that, since 
$$
h^0(X,\mathcal O_X(H-\lambda F))=h^0(\mathbb P^1,\mathcal O_{\mathbb P^1}(k_1-\lambda))+h^0(\mathbb P^1,\mathcal O_{\mathbb P^1}(k_1-\lambda))+h^0(\mathbb P^1,\mathcal O_{\mathbb P^1}(k_1-\lambda))
$$
for all $\lambda$, we have the following interpretation of the highest scrollar invariant 
\[
k_3=\mathrm{max}\{\lambda:\ h^0(X,\mathcal O_X(H-\lambda F))>0\}.
\]
In particular, since $F\cdot C=4$, and $\mathcal O_C(H)=K_C$ we obtain $4k_3\le 2g-2$ i.e. $k_3\le \left[\frac{g-1}{2}\right]$. One consequence of this fact is that if $g\ge 6$ then  $X$ is either smooth ($k_1\ge 1$) or it has one single singular point ($k_1=0$ and $k_2\ge 1$).

\begin{prop}
\label{prop:ScrollBielliptic}
Let us assume $C$ is bielliptic of genus $g\ge 6$. Then the scroll associated  to any $g^1_4$ on $C$ is singular and the vertex coincides with the vertex of the elliptic cone containing~$C$.
\end{prop}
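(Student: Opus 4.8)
The plan is to reduce the statement to two ingredients: first, that every base-point-free $g^1_4$ on $C$ is the pullback of a $g^1_2$ under the bielliptic map $\pi\colon C\to E$; and second, that the bielliptic involution $\sigma$ (with $E=C/\sigma$) forces every plane spanned by a divisor of such a pencil to pass through the vertex $v$ of the elliptic cone $S$ containing $C$. Combining the two will show that all fibres of the scroll share the common point $v$, which pins down both the singularity and its location.

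First I would locate the vertex $v$ intrinsically via the eigenspace decomposition of $H^0(C,K_C)$ under $\sigma$. One has $H^0(K_C)=V_+\oplus V_-$ with $V_+=\pi^*H^0(E,K_E)$ of dimension $g(E)=1$ and hence $\dim V_-=g-1$. In the canonical embedding $C\subset \mathbb{P}^{g-1}=\mathbb{P}(H^0(K_C)^*)$ the fixed locus of the induced involution is the disjoint union of the point $v=\mathbb{P}(V_+^*)$ and the hyperplane $\mathbb{P}(V_-^*)\cong\mathbb{P}^{g-2}$. For $p\in C$ with $p\neq\sigma(p)$ the secant line $\overline{p\,\sigma(p)}$ is $\sigma$-invariant, so $\sigma$ acts on it as a nontrivial involution whose two fixed points are its intersections with the two fixed components; in particular $v\in\overline{p\,\sigma(p)}$. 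Thus projection from $v$ realizes the degree-$2$ map $\pi$, its image is the elliptic normal curve $E'\subset\mathbb{P}^{g-2}$ of degree $g-1$, and $S$ is exactly the cone over $E'$ with vertex $v$.

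The hypothesis $g\ge 6$ enters in the first ingredient. Let $|L|$ be a base-point-free $g^1_4$, defining $\phi\colon C\to\mathbb{P}^1$ of degree $4$. If $\phi$ and $\pi$ did not factor through a common cover of degree $\ge 2$, the Castelnuovo--Severi inequality would give $g\le 2\cdot 1+4\cdot 0+(2-1)(4-1)=5$, a contradiction. Hence they factor through a common cover $h\colon C\to C'$; since $\deg\pi=2$ this forces $\deg h=2$, so $C'\cong E$, $h=\pi$, and $\phi=\psi\circ\pi$ for a degree-$2$ map $\psi\colon E\to\mathbb{P}^1$. Therefore $|L|=\pi^*|M|$ for a $g^1_2$ $|M|$ on $E$, and every $D\in|L|$ has the $\sigma$-invariant form $D=p_1+\sigma(p_1)+q_1+\sigma(q_1)$. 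For each such $D$ the plane $\langle D\rangle$ contains the secant $\overline{p_1\,\sigma(p_1)}$, hence contains $v$; so $v$ lies on every ruling plane $F=\langle D\rangle$ of the scroll $X=\bigcup_{D\in|L|}\langle D\rangle$. A smooth rational normal scroll has pairwise disjoint fibres, so the existence of a point common to all fibres forces $k_1=0$: thus $X$ is singular, it is a cone, and its unique vertex is the common point $v$, which is precisely the vertex of $S$.

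The main obstacle is the first ingredient: a bielliptic curve carries a whole one-parameter family of $g^1_4$'s (the pullbacks already sweep out a copy of $\mathrm{Pic}^2(E)\cong E$), so one cannot argue from uniqueness of the pencil and genuinely needs the genus bound. Castelnuovo--Severi is what excludes sporadic non-pullback fours and makes the uniform description $D=p_1+\sigma(p_1)+q_1+\sigma(q_1)$ available for \emph{every} $g^1_4$ at once; once that is in hand, the rest is the elementary linear-algebra observation that the $\sigma$-secants all pass through $v$.
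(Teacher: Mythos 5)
Your proof is correct and follows essentially the same route as the paper's: split $H^0(K_C)$ under the bielliptic involution $\sigma$ to locate the vertex $v$ as the isolated fixed point, note that every divisor of the $g^1_4$ is $\sigma$-invariant so each ruling plane of the scroll contains a $\sigma$-invariant secant and hence $v$, and conclude that the scroll is a cone with vertex $v$. The one genuine addition is that you justify via Castelnuovo--Severi (using $g\ge 6$) that every $g^1_4$ is pulled back from a $g^1_2$ on $E$ --- a fact the paper uses implicitly when it writes the divisors of the pencil as $f^{-1}(z_1)+f^{-1}(z_2)$ without comment.
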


\proof
The situation is the following. Let $f:C\to E$ be the double cover. Then $f_*(K_C)\cong\mathcal O_E\oplus\mathcal O_E(B)$ with $B$ the branch divisor. It follows that $H^0(K_C)=\mathbb C\oplus H^0(\mathcal O_E(B))$ and hence the above splitting induces a natural cone containing $C$ whose hyperplane section is $E$ embedded by $|B|$ into a hyperplane $\mathbb P^{g-2}$.
Then, for any $g^1_4$ on $C$ the associated 3-dimensional scroll $X$ in $\mathbb P^{g-1}$ defined as the union of the corresponding $4$--secant $2$--planes is singular at one single point which is precisely the vertex $v$ of the elliptic cone. Indeed, let $z_1+z_2$ be a $2$--cycle on $E$ and $f^{-1}(z_1)=x_1+y_1$, $f^{-1}(z_2)=x_2+y_2$. The fibre of $X$ is by definition the plane spanned by $x_1,x_2,y_1,y_2$ and hence contains the lines $\overline{x_1y_1}=\overline{vz_1}$ and $\overline{x_2y_2}=\overline{vz_2}$ which pass through the vertex of the elliptic cone. In particular, all the fibres of the 3-dimensional scroll pass through the vertex of the scroll and hence $v$ is a singular point. 

The fact  that $v$ is the only singular point of $X$ can be also proved geometrically in this case as follows. Assume that all the fibres contain a whole line $L$.  Since $L$ passes through $v$ which is not contained in the hyperplane $\mathbb P^{g-2}=|B|$, $L$ intersects this hyperplane in one point $u$.  Then the line $\overline{z_1z_2}$ which is contained in the plane $\langle x_1,x_2,y_1,y_2\rangle$ must interesect $L$ and hence passes through $u$. Hence for any other two points $z_2,z_3$ on $E$ with $z_1+z_2\sim z_3+z_4$ the line $\overline{z_3z_4}$ also passes through $u$ which implies that $\langle z_1,z_2,z_3,z_4\rangle\subset \mathbb P^{g-2}$ is a $4$--secant $2$--plane for $E$, hence $z_1+z_2+z_3+z_4$ fails to impose independent conditions on $|B|$, in particular $B-z_1-z_2-z_3-z_4$ is special. This is possible only if $\mathcal O_E(B)$ is of degree 4, i.e. $g=5$, contradiction.
\endproof

The main aim of this section is to prove the following result, which answers Question 3:

\begin{theorem}
\label{thm:main}
Let $C\subset X \subset \bP^{g-1}$ be a 4-gonal canonical curve of genus $g \ge 6$.  Then $\Syz_2(C)= C$ if and only if $C$ is neither bielliptic nor it lies on a Del Pezzo surface. 
\end{theorem}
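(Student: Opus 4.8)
The plan is to reduce Theorem \ref{thm:main} to the structural results already assembled in the previous sections, so that the only genuinely new content is the analysis of the numerical type $(a,b)$ of the two defining quadrics on the scroll $X$. By Proposition \ref{P:excs}, if $C$ is bielliptic or lies on a Del Pezzo surface then $\Syz_2(C)\ne C$, so one implication is already done and I only need to prove the converse: if $C$ is 4-gonal of genus $g\ge 6$ and is neither bielliptic nor Del Pezzo, then $\Syz_2(C)=C$. By Proposition \ref{P:canonX}, the curve $C$ is cut out on the scroll $X$ by two quadric sections $Q_1\in|2H-aF|$, $Q_2\in|2H-bF|$ with $a+b=g-5$ and $a,b\ge 0$, and the ideal $I_{C,2}$ is generated by the minors of $M$ together with liftings of $\pi^*H^0(\cO(a))\cdot Q_1$ and $\pi^*H^0(\cO(b))\cdot Q_2$.

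First I would dispose of the case where both scrollar multiplicities are positive, $a>0$ and $b>0$: here Corollary \ref{C:canonX} applies verbatim and gives $\Syz_2(C)=C$ immediately, using Lemma \ref{L:syz1} to link each generator of $\pi^*H^0(\cO(a))\cdot Q_1$ (resp.\ the $b$-side) to a partner by a linear syzygy, while the minors of $M$ are linked by the determinantal (Eagon--Northcott) linear syzygies. So the entire theorem hinges on understanding what forces $a=0$ (equivalently $b=0$, by symmetry), and showing that $a=0$ happens \emph{precisely} in the two excluded geometries. I would therefore prove the key dichotomy: among 4-gonal canonical curves of genus $g\ge 6$ (neither hyperelliptic nor trigonal), the defining data can be taken with $a,b>0$ \emph{unless} $C$ is bielliptic or lies on a Del Pezzo surface, in which case one is forced into the degenerate type with, say, $a=0$.

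The heart of the argument is this numerical-to-geometric translation, and I expect it to be the main obstacle. When $a=0$ the quadric $Q_1$ lies in $|2H|$ with no fibre subtracted, and the trivial syzygy $Q_{01}Q_{02}-Q_{02}Q_{01}=0$ analyzed in the Remark after Corollary \ref{C:canonX} fails to be a combination of linear ones, so $\Syz_2(C)\supsetneq C$; conversely, whenever $a,b>0$ every quadric generator participates in a genuine linear syzygy and $\Syz_2(C)=C$. Thus I must show that $a=0$ forces $C$ to be Del Pezzo or bielliptic. The route I favor is to read off the surface $\Syz_2(C)$ directly: when $a=0$ the syzygy scheme is cut out by the minors of $M$ and the $Q_2$-liftings alone, which geometrically describes $C$ as lying on a surface $S$ of degree $g-1$ in $\bP^{g-1}$, a surface of \emph{almost minimal degree} (one more than the minimal degree $g-2$ of a nondegenerate surface). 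Invoking the classification of surfaces of almost minimal degree, $S$ is either a (projected or embedded) Del Pezzo surface or a cone over a normal elliptic curve of degree $g-1$ in $\bP^{g-2}$; the cone case is exactly the bielliptic situation by Proposition \ref{prop:ScrollBielliptic} and the cone structure described in Section 4, while the Del Pezzo case is the Del Pezzo situation. This step is delicate because I must verify that $S=\Syz_2(C)$ really does have degree $g-1$ and is the scheme-theoretic intersection of the relevant quadrics, and that the classification leaves no third possibility compatible with $C$ being a smooth 4-gonal canonical curve of genus $\ge 6$.

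To control the degenerate type cleanly I would also record that the scrollar geometry itself detects the exceptional cases: by Proposition \ref{prop:ScrollBielliptic}, bielliptic curves force the scroll $X$ to be a cone ($k_1=0$), and I would argue that for non-bielliptic, non-Del Pezzo 4-gonal curves one can always choose the $g^1_4$ so that the balanced splitting of the quadrics gives $a,b>0$; the genus bound $g\ge 6$ and the gonality assumption guarantee $a+b=g-5\ge 1$ so that at least one of $a,b$ is positive, and the symmetric roles of $Q_1,Q_2$ together with the almost-minimal-degree classification pin down the remaining case distinctions. Assembling these pieces yields both implications and completes the proof.
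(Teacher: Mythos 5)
Your overall route coincides with the paper's: one direction is delegated to Proposition \ref{P:excs}; for the other, you fix the scroll $X$ of a $g^1_4$, write $C$ as a complete intersection of two quadric sections of types $|2H-aF|$, $|2H-bF|$ with $a+b=g-5$, invoke Corollary \ref{C:canonX} when $a,b>0$, and in the degenerate case $a=0$ (resp.\ $b=0$) observe that $C$ lies on the divisor of class $2H-(g-5)F$, a nondegenerate surface of degree $(2H-(g-5)F)\cdot H^2=g-1$, whence the classification of surfaces of almost minimal degree forces Del Pezzo or elliptic cone, contradicting the hypothesis. This is exactly the paper's proof, including the final classification step (and, like the paper, you leave implicit the verification that the elliptic-cone case is the bielliptic case and that the non-normal surfaces in the classification cannot occur).

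There is, however, one genuine gap: you cite Proposition \ref{P:canonX} for the statement that \emph{every} 4-gonal canonical curve $C$ of genus $g\ge 6$ is cut out on its scroll by two quadric sections with $a+b=g-5$. Proposition \ref{P:canonX} asserts the converse implication (given such $Q_1,Q_2$, their intersection is a 4-gonal canonical curve); it does not produce the pair $(Q_1,Q_2)$ for a given $C$. This existence statement is precisely the first half of the paper's argument: one computes $h^0(X,\I_{C/X}(2H))=g-3$, sets $\lambda_0=\max\{\lambda:h^0(X,\I_{C/X}(2H-\lambda F))>0\}$, proves $\lambda_0\le g-5$ (otherwise $H^0(\bP^1,\cO_{\bP^1}(\lambda_0))\cdot Q_1$ would exhaust $H^0(X,\I_{C/X}(2H))$ and the quadrics through $C$ inside $X$ would cut out a surface rather than $C$), and then exhibits $Q_2$ of type $2H-(g-5-\lambda_0)F$ using the splitting $\pi_*\I_{C/X}(2H)=\cO_{\bP^1}(\lambda_0)\oplus\cO_{\bP^1}(\lambda_1)$, equivalently Schreyer's \cite[Corollary 4.4]{fS86}. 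Without this input your case analysis on $(a,b)$ never gets started, so you should either reproduce that argument or cite Schreyer directly. Two smaller inaccuracies: Proposition \ref{prop:ScrollBielliptic} proves that bielliptic curves yield singular scrolls, not the implication you need (curve on an elliptic cone $\Rightarrow$ bielliptic), which instead follows by projecting from the vertex as in Section 4; and no choice of $g^1_4$ is required in your last paragraph --- the dichotomy $\lambda_1>0$ versus $\lambda_1=0$ is applied to an arbitrary fixed $g^1_4$, with $\lambda_1=0$ immediately contradicting the hypothesis on $C$.
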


\begin{proof}
 Let $C\subset X\subset \bP^{g-1}$ where $X$ is the scroll determined by $|D|$, a complete $g^1_4$ on $C$. We have:
\begin{equation}\label{E:quadrCX}
h^0(X,\mathcal{I}_{C/X}(2H))= \binom{g-2}{2}- \binom{g-3}{2}=g-3
\end{equation}
Let  
$$
\lambda_0 =\mathrm{max}\left\lbrace\lambda: h^0(X,\mathcal{I}_{C/X}(2H-\lambda F))> 0   \right\rbrace
$$
Then $\lambda_0 \le g-5$. In fact choose $0\ne Q_1 \in H^0(X,\mathcal{I}_{C/X}(2H-\lambda_0 F))$; then $H^0(\bP^1,\cO_{\bP^1}(\lambda_0))\cdot Q_1\subset H^0(X,\mathcal{I}_{C/X}(2H))$ must be a subspace of dimension $\lambda_0+1\le g-4$, otherwise the $g-3$ quadrics   in $H^0(X,\mathcal{I}_{C/X}(2H))$ intersect in a surface. Whence the stated inequality follows. 
Now   take $\lambda_1= g-5-\lambda_0$ and 
$$
Q_2\in H^0(X,\mathcal{I}_{C/X}(2H-\lambda_1 F))\setminus H^0(\bP^1,\cO_{\bP^1}(\lambda_0-\lambda_1))\cdot Q_1.
$$

%\marginpar{\tiny We have to prove this formula on $\pi_*\mathcal{I}_{C/X}(2H)$. Apply Grothendieck-Riemann-Roch to find its determinant maybe?} 
It is easy to see that such a quadric exists, since $\pi_*\mathcal{I}_{C/X}(2H)=\mathcal O_{\bP^1}(\lambda_0)\oplus\mathcal O_{\bP^1}(\lambda_1)$ from the definition and hence $h^0(X,\mathcal{I}_{C/X}(2H-\lambda_1 F))=1+h^0(\bP^1,\cO_{\bP^1}(\lambda_0-\lambda_1))$. Another argument for the existence of $Q_2$ is provided by \cite[Corollary 4.4]{fS86} which shows that $C$ is a complete intersection of two quadric sections in $X$ one of which is $Q_1$, see also \cite[(6.3)]{fS86}. Note that the inequalities $0\le \lambda_1\le\lambda_0\le g-5$ are also proved in \cite[(6.2)--(6.3)]{fS86}.

Then  $H^0(X,\mathcal{I}_{C/X}(2H))$ is generated by $H^0(\bP^1,\cO_{\bP^1}(\lambda_0))\cdot Q_1$ and 
$H^0(\bP^1,\cO_{\bP^1}(\lambda_0))\cdot Q_1$  (\cite[Corollary 4.4]{fS86} or Proposition \ref{P:canonX}). It follows from Corollary \ref{C:canonX}   that $\Syz_2(C)=C$ unless $\lambda_1=0$.  If $\lambda_1=0$ then $Q_1$ is a surface of degree $(2H-(g-5)F)\cdot H^2= 2(g-3)-(g-5)=g-1$ and hence it is either a Del Pezzo surface or an elliptic cone and $C$ is the complete intersection of $Q_1$ with a  quadric lifting $\wt Q_2$ of $Q_2$.  
\end{proof}

%
%TODO LIST (project):
%\begin{enumerate}
%\item prove that $N_p$ implies $Syz_p(C)=C$ using Koszul relations.
%\item for the tetragonal case, prove that the Petri map is not surjective without using the classification of surfaces of degree $g-1$ (Nagata).
%\item the pentagonal case, corresponding to $Syz_3$. See also Keem. Possible toy example: a curve which is a double cover of a genus 2 curve and has a bpf $g^1_5$. 
%\end{enumerate}

\end{document}